\documentclass[preprint,12pt,authoryear]{elsarticle}

\usepackage{amsmath}
\usepackage{newtxtext,newtxmath}
\usepackage{graphicx}
\usepackage{booktabs}
\usepackage{algorithm}
\usepackage{enumitem}
\usepackage[hidelinks]{hyperref}

\newcommand{\Vtx}{\boldsymbol{x}}
\newcommand{\Vtt}{\boldsymbol{t}}
\newcommand{\VtL}{\boldsymbol{L}}
\newcommand{\VtF}{\boldsymbol{F}}
\newcommand{\Vte}{\boldsymbol{e}}
\newcommand{\Vth}{\boldsymbol{h}}
\newcommand{\VtH}{\boldsymbol{H}}
\newcommand{\Vts}{\boldsymbol{s}}
\newcommand{\Vty}{\boldsymbol{y}}
\newcommand{\Vtd}{\boldsymbol{d}}
\newcommand{\Vteta}{\boldsymbol{\eta}}
\newcommand{\ClN}{\mathcal{N}}
\newcommand{\ClA}{\mathcal{A}}
\newcommand{\ClW}{\mathcal{W}}

\newtheorem{lemma}{Lemma}

\newcommand{\hd}[1]{\begin{tabular}[b]{@{}c@{}}#1\end{tabular}}

\newlist{steps}{enumerate}{2}
\setlist[steps,1]{label=\arabic*.,leftmargin=*,nosep}
\setlist[steps,2]{label=\arabic{stepsi}.\arabic*.,leftmargin=*,nosep}

\makeatletter
\def\ps@pprintTitle{\let\@oddhead\@empty\let\@evenhead\@empty
  \def\@oddfoot{\hfil\thepage\hfil}\let\@evenfoot\@oddfoot}
\makeatother

\begin{document}

\begin{frontmatter}

\title{A Link-Based Algorithm for Stochastic User Equilibrium with Bounded Choice and Local Detour Threshold}

\author{Takeshi Nagae}
\ead{nagae@tottori-u.ac.jp}
\affiliation{organization={Department of Social Systems and Civil Engineering, Faculty of Engineering, Tottori University},
            city={Tottori},
            country={Japan}}

\begin{abstract}
This study develops a novel solution method for stochastic user equilibrium (SUE) under the bounded choice model with local detour threshold (BCM-LDT SUE; \citealp{Rasmussen:2024aa}), which restricts the choice set in line with observed route choice behavior. The model assigns zero probability to paths whose total cost exceeds a bound or whose local detour exceeds a threshold; thus, the path choice set varies with link costs. The proposed algorithm updates link flows while implicitly enumerating the path choice set via a depth-first search (DFS) pruned with the BCM-LDT bounds. This distinguishes it from the original algorithm, which updates path flows, and enables quasi-Newton-type acceleration. Since paths are never stored, the method can handle choice sets totaling on the order of $10^7$ paths. On a personal computer, it reached equilibrium in a few to about fifteen minutes, not only for the Anaheim network under the original settings but also for larger networks such as Winnipeg and Barcelona.
\end{abstract}

\begin{keyword}
stochastic user equilibrium \sep bounded choice model \sep local detouredness \sep implicit path enumeration \sep Anderson acceleration
\end{keyword}

\end{frontmatter}

\section{Introduction}\label{sec:intro}

This study develops a solution method for the stochastic user equilibrium (SUE) under the bounded choice model with local detour threshold (BCM-LDT; \citealp{Rasmussen:2024aa}). Consider a network represented by a directed graph $G=(\ClN,\ClA)$ with a set of nodes $\ClN$ and a set of links $\ClA$, and a set of OD pairs $\ClW$. For each $w\in\ClW$, let $K_w$ be the set of simple paths and $q_w$ the travel demand. Link costs $\Vtt(\Vtx)=(t_a(\Vtx):a\in\ClA)$ are continuous functions of link flows $\Vtx=(x_a:a\in\ClA)$. A link $a\in\ClA$ from node $i$ to node $j$ is also denoted by $(i,j)$, and its cost by $t_a$ or $t_{ij}$. The cost of path $k$ is $C_k=\sum_{a\in k}t_a$. The local detouredness of path $k$ is defined as
\begin{equation}
  \phi_k=\max_{(i,j)\in S_k}\frac{C_k(i\to j)-d(i,j)}{d(i,j)}.\label{eq:ldt}
\end{equation}
Here, $S_k$ is the set of ordered node pairs on $k$, $C_k(i\to j)$ is the cost of the subpath of $k$ from $i$ to $j$, and $d(i,j)$ is the minimum cost from node $i$ to node $j$.\footnote{\citet{Rasmussen:2024aa} define $d(i,j)$ as the minimum cost among the segment alternatives from $i$ to $j$, which coincides with the minimum node-to-node cost for positive link costs; they also use the latter in their algorithm.} The BCM-LDT choice probability of path $k\in K_w$ is
\begin{equation}
  P_k=\frac{(e^{-\theta_1(C_k-\tau d_w)}-1)_+(e^{-\theta_2(\phi_k-\gamma)}-1)_+}{\sum_{k'\in K_w}(e^{-\theta_1(C_{k'}-\tau d_w)}-1)_+(e^{-\theta_2(\phi_{k'}-\gamma)}-1)_+},\label{eq:prob}
\end{equation}
where $d_w$ is $d(i,j)$ for the origin and destination of $w$ and $(\cdot)_+=\max(\cdot,0)$. In Eq.~\eqref{eq:prob}, $\tau>1$ is the global cost bound, $\gamma>0$ the local detour threshold, and $\theta_1,\theta_2>0$ the scaling parameters for travel cost and local detouredness. A path receives zero probability if its cost reaches $\tau d_w$ or its local detouredness reaches $\gamma$; thus, the path choice set varies with link costs. \citet{Rasmussen:2024aa} formulated BCM-LDT SUE as a fixed-point problem in path flows $\Vth$:
\begin{equation}
  \Vth=\VtH(\Vth),\qquad H_k(\Vth)=q_wP_k\bigl(\Vtt(\Delta\Vth)\bigr)\quad(k\in K_w,\ w\in\ClW),\label{eq:fp-path}
\end{equation}
where $K=\bigcup_{w\in\ClW}K_w$ and $\Delta=[\delta_{ak}]_{a\in\ClA,\,k\in K}$ is the link-path incidence matrix ($\delta_{ak}=1$ if path $k$ uses link $a$ and $0$ otherwise). They also proposed an algorithm that enumerates paths by a pruned depth-first search (pruned DFS) and updates path flows at each iteration.

In this study, we first rewrite the fixed-point problem in terms of link flows $\Vtx=\Delta\Vth$. BCM-LDT SUE becomes
\begin{equation}
  \Vtx=\VtF(\Vtx)=\VtL\bigl(\Vtt(\Vtx)\bigr),\qquad L_a(\Vtt)=\sum_{w\in\ClW}\sum_{k\in K_w}\delta_{ak}\,q_wP_k(\Vtt)\quad(a\in\ClA),\label{eq:fp-link}
\end{equation}
as $\VtH$ depends on $\Vth$ only through $\Vtt(\Delta\Vth)=\Vtt(\Vtx)$. We call $\VtL$ the BCM-LDT loading.
We then extend the pruned DFS above into an algorithm that computes $\VtL(\Vtt)$ exactly with implicit path enumeration: when each path is found, its choice weight, i.e.\ the numerator of its choice probability, is accumulated on the links of the path, and the path itself is then discarded. Since the formulation and the iteration are closed in the space of link flows $\Vtx$, BCM-LDT SUE can be treated as a standard $|\ClA|$-dimensional nonlinear fixed-point problem, and can thus be solved by Anderson acceleration, a well-known quasi-Newton-type solution method. This distinguishes the present method from that of \citet{Rasmussen:2024aa}, in which the path choice set, and hence the dimension and components of the unknowns, may change from iteration to iteration; it updates the solution by the method of successive weighted averages (MSWA; \citealp{Liu:2009aa}).

To maintain a behaviorally realistic and computationally tractable path choice set, SUE models that endogenously assign zero probability to some paths have been studied. One line of research imposes bounds on path attributes: \citet{Watling:2018aa} proposed SUE based on the bounded choice model (BCM), which has been extended to the bounded path size model \citep{Duncan:2022aa}, to BCM-LDT \citep{Rasmussen:2024aa}, and to a model combining the two \citep{Duncan:2025aa}. Another line is the perturbed utility Markovian traffic equilibrium (PUME; \citealp{yao2026perturbedutilitymarkoviantraffic}), in which zero probabilities arise from the regularization of link choices at each node rather than from path-level criteria such as total cost or local detouredness. These and other SUE models formulated as fixed-point problems are commonly solved by the method of successive averages (MSA; \citealp{Sheffi:1982aa}) or the MSWA.

The rest of this paper is organized as follows. Section~\ref{sec:loading} presents the link-based loading, which extends the pruned DFS to compute link flows for given link costs without storing paths. Section~\ref{sec:anderson} describes the solution method for BCM-LDT SUE based on Anderson acceleration. Section~\ref{sec:exp} applies the method to the Anaheim network and shows that it reproduces the path choice sets reported by \citet{Rasmussen:2024aa} and reaches equilibrium within minutes on a personal computer. Section~\ref{sec:concl} concludes the paper with a discussion of limitations.

\section{Link-based loading with implicit path enumeration}\label{sec:loading}

Suppose that the link costs $\Vtt=(t_{ij}:(i,j)\in\ClA)$ and the resulting minimum costs between all node pairs, $D=[d(i,j)]$, are given. For each OD pair $w=(r,s)$, the pruned DFS of \citet{Rasmussen:2024aa} extends a subpath (prefix) from the origin $r$ in a depth-first manner; when a node $i_n$ is to be appended to a prefix $(r=i_1,\dots,i_{n-1})$, it prunes the branch if any of the following criteria is met:
\begin{itemize}[nosep]
  \item[(C1)] $i_n$ is already on the prefix;
  \item[(C2)] $C(r\to i_n)+d(i_n,s)>\tau d_w$, where $C(i_l\to i_m)=\sum_{u=l}^{m-1}t_{i_ui_{u+1}}$ is the cost of the subpath from $i_l$ to $i_m$;
  \item[(C3)] $\phi(r\to i_n)>\gamma$, where
  \begin{equation*}
    \phi(r\to i_n)=\max_{1\le l<m\le n}\frac{C(i_l\to i_m)-d(i_l,i_m)}{d(i_l,i_m)}
  \end{equation*}
  is the local detouredness of the subpath from $r$ to $i_n$.
\end{itemize}
Otherwise, $i_n$ is appended to the prefix; if $i_n=s$, a path has been found and the branch ends there. When a path $k$ is found, $C_k=C(r\to s)$ and $\phi_k=\phi(r\to s)$, so that the numerator of its choice probability, hereafter called the weight of path $k$,
\begin{equation}
  \pi_k=\bigl(e^{-\theta_1(C_k-\tau d_w)}-1\bigr)_+\bigl(e^{-\theta_2(\phi_k-\gamma)}-1\bigr)_+,\label{eq:weight}
\end{equation}
can be computed. This pruned DFS enumerates all paths with $\pi_k>0$ \citep{Rasmussen:2024aa},\footnote{For completeness, a proof is given in \ref{app:proof}.} and the choice probability is given by
\begin{equation}
  P_k=\frac{\pi_k}{\sum_{k'\in K_w}\pi_{k'}}.\label{eq:prob-weight}
\end{equation}

The pruned DFS can be extended to compute the BCM-LDT loading $\VtL(\Vtt)$ without storing paths. Substituting Eq.~\eqref{eq:prob-weight} into Eq.~\eqref{eq:fp-link} yields
\begin{equation}
  L_a(\Vtt)=\sum_{w\in\ClW}q_w\frac{\sum_{k\in K_w}\delta_{ak}\pi_k}{\sum_{k\in K_w}\pi_k}=\sum_{w\in\ClW}q_w\frac{\Pi^w_a}{\Pi^w}\quad(a\in\ClA),\label{eq:loading}
\end{equation}
where $\Pi^w$ is the total weight of the paths of OD pair $w$ and $\Pi^w_a$ is that of the paths using link $a$. These sums can be accumulated during the pruned DFS: each time a path $k\in K_w$ is found, $\pi_k$ is added to $\Pi^w$ and to $\Pi^w_a$ for each link $a$ on $k$, and the path is then discarded.\footnote{To avoid overflow, our implementation divides $\pi_k$ by a positive constant common to all paths of the same OD pair; this does not change $P_k$.}

This accumulation can be implemented within the pruned DFS written as a recursive function (Algorithm~\ref{alg:dfs}), which is called for each OD pair in the BCM-LDT loading (Algorithm~\ref{alg:loading}). PrunedDFS receives a prefix $(i_1,\dots,i_n)$ together with its cost $C=C(r\to i_n)$ and local detouredness $\phi=\phi(r\to i_n)$; it searches the extensions of the prefix while pruning branches by (C1)--(C3), and returns the total weight of the paths sharing the prefix. The value returned for an extended prefix $(i_1,\dots,i_n,j)$, i.e., the total weight of the paths sharing it, is added to $\Pi^w_{i_nj}$ (Step~3.5); hence, when the search ends, $\Pi^w_a$ and the value returned for the prefix $(r)$ are equal to $\Pi^w_a$ and $\Pi^w$ in Eq.~\eqref{eq:loading}, respectively. Since no path is stored, Algorithm~\ref{alg:loading} requires memory only for $\Pi^w_a$ ($|\ClA|$ values), a stack proportional to the recursion depth, which is less than the number of nodes, and $D$ (the square of the number of nodes). Note that the computations for different OD pairs are independent and can be executed in parallel.

\begin{algorithm}[t]
\caption{BCM-LDT loading}\label{alg:loading}
\textbf{Input:} $\Vtt$, $D$. \textbf{Output:} $\VtL(\Vtt)$.
\begin{steps}
  \item $\VtL:=\mathbf{0}$.
  \item For each $w=(r,s)\in\ClW$:
  \begin{steps}
    \item $\Pi^w_a:=0$ $(a\in\ClA)$.
    \item $\Pi^w:=\mathrm{PrunedDFS}(r;\ 0,\ 0)$.
    \item $L_a:=L_a+q_w\Pi^w_a/\Pi^w$ $(a\in\ClA)$.
  \end{steps}
  \item Return $\VtL$.
\end{steps}
\end{algorithm}

\begingroup
\renewcommand{\thealgorithm}{1.1}
\begin{algorithm}[t]
\caption{PrunedDFS$(i_1,\dots,i_n;\ C,\ \phi)$}\label{alg:dfs}
\textbf{Input:} a prefix $(i_1,\dots,i_n)$, its cost $C=C(r\to i_n)$, and its local detouredness $\phi=\phi(r\to i_n)$.\\
\textbf{Global:} $\Vtt$, $D$, $\theta_1,\theta_2,\tau,\gamma$, $w=(r,s)$; updates $\Pi^w_{ij}$.\\
\textbf{Output:} the total weight of the paths sharing the prefix $(i_1,\dots,i_n)$.
\begin{steps}
  \item If $i_n=s$, return $\pi_k$ in Eq.~\eqref{eq:weight} with $C_k=C$ and $\phi_k=\phi$.
  \item $\sigma:=0$.
  \item For each link $(i_n,j)\in\ClA$:
  \begin{steps}
    \item $C':=C+t_{i_nj}$.
    \item If $j\in\{i_1,\dots,i_n\}$ (C1) or $C'+d(j,s)>\tau d_w$ (C2), skip to the next link.
    \item $\phi':=\max\{\phi,\ \max_{l\le n}(C(i_l\to j)-d(i_l,j))/d(i_l,j)\}$; if $\phi'>\gamma$ (C3), skip to the next link.
    \item $\sigma':=\mathrm{PrunedDFS}(i_1,\dots,i_n,j;\ C',\ \phi')$.
    \item $\Pi^w_{i_nj}:=\Pi^w_{i_nj}+\sigma'$; $\sigma:=\sigma+\sigma'$.
  \end{steps}
  \item Return $\sigma$.
\end{steps}
\end{algorithm}
\endgroup
\addtocounter{algorithm}{-1}

\section{Fixed-point iteration with Anderson acceleration}\label{sec:anderson}

Let $\Vtx_n$ denote the link flows at the $n$th iteration $(n=0,1,\dots)$ and $\Vte_n=\VtF(\Vtx_n)-\Vtx_n$ the residual. Anderson acceleration \citep{Anderson:1965aa,Walker:2011aa} is a quasi-Newton-type method \citep{Fang:2009aa} that constructs the direction $\Vtd_n$ from the $m$ most recent differences $\Vts_i=\Vtx_{i+1}-\Vtx_i$ and $\Vty_i=\Vte_{i+1}-\Vte_i$ $(i=n-m,\dots,n-1)$.\footnote{MSA and MSWA iterate $\Vtx_{n+1}=\Vtx_n+\lambda_n\Vte_n$; the direction is fixed to the residual $\Vte_n$, and they differ only in the choice of the step size $\lambda_n$.} The direction is computed as
\begin{equation}
  \Vtd_n=\omega\Vte_n-\sum_{i=n-m}^{n-1}\eta_{n,i}(\Vts_i+\omega\Vty_i),\label{eq:anderson}
\end{equation}
where $\omega\in(0,1]$ is a damping parameter and $\Vteta_n=(\eta_{n,i})$ is the solution of the least-squares problem $\min_{\Vteta}\|\Vte_n-\sum_{i=n-m}^{n-1}\eta_i\Vty_i\|$. Note that (i) the lower limit of the sums is $\max(0,n-m)$, so that $\Vtd_0=\omega\Vte_0$; (ii) since $\Vteta_n$ is $m$-dimensional, computing $\Vtd_n$ requires negligible time and memory compared with evaluating $\VtF$.

The whole procedure for solving BCM-LDT SUE can be written as Algorithm~\ref{alg:sue}.

\begin{algorithm}[t]
\caption{Solution of BCM-LDT SUE}\label{alg:sue}
\textbf{Input:} network $G=(\ClN,\ClA)$, OD demands $q_w$ $(w\in\ClW)$, link cost functions $\Vtt(\cdot)$, model parameters $\theta_1,\theta_2,\tau,\gamma$, and algorithm parameters $m,\omega,\varepsilon$.\\
\textbf{Output:} equilibrium link flows $\Vtx$.
\begin{description}[nosep,font=\normalfont]
  \item[Step 0 (Initialization).] $\Vtx_0:=\VtF(\mathbf{0})$, computed by the procedure in Step~1; $n:=0$.
  \item[Step 1 (Evaluation).] Compute $\Vtt(\Vtx_n)$ and the corresponding minimum cost matrix $D=[d(i,j)]$. Compute $\VtF(\Vtx_n)=\VtL(\Vtt(\Vtx_n))$ by Algorithm~\ref{alg:loading}. $\Vte_n:=\VtF(\Vtx_n)-\Vtx_n$.\textsuperscript{a}
  \item[Step 2 (Convergence test).] If $\|\Vte_n\|/\|\Vtx_n\|<\varepsilon$, return $\Vtx_n$.
  \item[Step 3 (Direction finding).] If $n\ge1$, $\Vts_{n-1}:=\Vtx_n-\Vtx_{n-1}$ and $\Vty_{n-1}:=\Vte_n-\Vte_{n-1}$. Compute $\Vtd_n$ by Eq.~\eqref{eq:anderson}.
  \item[Step 4 (Move).] $\Vtx_{n+1}:=\Vtx_n+\Vtd_n$; $n:=n+1$; go to Step~1.\textsuperscript{b}
\end{description}
\vspace{1ex}
\footnotesize
\textsuperscript{a}\,Strictly speaking, $\Vtx_n$ may have negative components. Our implementation then evaluates link costs at $[\Vtx_n]_+$, the componentwise positive part, i.e., $\VtF(\Vtx)=\VtL(\Vtt([\Vtx]_+))$, and returns $\VtF(\Vtx_n)$ instead of $\Vtx_n$ in Step~2.\\
\textsuperscript{b}\,Since $\VtF$ is not guaranteed to be a contraction mapping, the residual norm $\|\Vte_n\|$ does not necessarily decrease at every iteration of Algorithm~\ref{alg:sue}. Our implementation then halves the step size until the residual norm decreases, and restarts Anderson acceleration by discarding the stored differences if this fails. These safeguards (a and b) are not essential to the proposed method and are not discussed further; neither was activated in the numerical experiments in Section~\ref{sec:exp}.
\end{algorithm}

\section{Numerical experiments}\label{sec:exp}

This section examines the proposed method on the Anaheim network in terms of the consistency with the results of \citet{Rasmussen:2024aa} and the computational efficiency.\footnote{Although comparing equilibrium solutions across route choice models is beyond the main scope of this paper, interested readers may refer to \ref{app:sf}, where BCM-LDT is compared with MNL, the cost-based BCM, and a Markovian logit model on the Sioux Falls network.} Following \citet{Rasmussen:2024aa}, we use the following data and settings:
\begin{itemize}[nosep]
  \item The network data, consisting of 416 nodes, 914 links, and 1,406 OD pairs, are taken from the same source \citep{TNTP:2026}.
  \item $\theta_1=\theta_2=0.2$ and $\tau=1.6$; the values of $\gamma$ are given below.
  \item The cost of each link is its travel time (in minutes), given by the BPR function with the parameters in the data, plus 0.5 times its length (in kilometers).\footnote{Link lengths in the data are given in feet and converted with 0.3/1000 km/ft, as in the implementation of \citet{Rasmussen:2024aa}.}
  \item Links connected to OD nodes can be used only by trips with an origin or destination at these nodes; that is, paths do not pass through OD nodes, and $d(i,j)$ is computed accordingly.
\end{itemize}
In Algorithm~\ref{alg:sue}, we set $m=6$, $\omega=0.2$, and $\varepsilon=10^{-9}$, and compute $D$ by Dijkstra's algorithm. All computations were performed with Julia 1.13 on a Mac mini (Apple M4, 32 GB memory) using a single thread.

We first compare the path choice sets obtained by the proposed method with those of \citet{Rasmussen:2024aa}. As in their settings, we fix the link travel times at the UE values provided in the data and count the paths with positive weight (i.e., the choice set) for OD pair (4,7) with $\tau=1.6$ and $\gamma=0.1$, 0.2, 0.4, 0.6, 0.8, and 1.0. The resulting numbers, 1, 1, 6, 33, 600, and 3,725, are of the same order of magnitude as those in their Fig.~14. In addition, at the equilibrium with $\gamma=0.8$, the choice set of OD pair (4,7) consists of 351 paths, which is equal to the number they reported.

Next, we solve BCM-LDT SUE by Algorithm~\ref{alg:sue} for $\gamma=0.6$, 0.8, and 1.0. Table~\ref{tab:anaheim} shows the number of iterations, the computation time (including initialization), and the total choice set size over all OD pairs at equilibrium. Fig.~\ref{fig:anaheim} shows the relative residual $\|\Vte_n\|/\|\Vtx_n\|$ against the iteration number $n$, where $n=0$ corresponds to the initial solution. For all values of $\gamma$, the residual decreases almost geometrically at a similar rate, by about one order of magnitude every two to three iterations, and the convergence criterion $\|\Vte_n\|/\|\Vtx_n\|<\varepsilon=10^{-9}$ (dotted line in Fig.~\ref{fig:anaheim}) is met within 24 iterations. While the number of iterations hardly depends on $\gamma$, the computation time increases with the choice set size. About 99\% or more of the computation time is spent in Algorithm~\ref{alg:loading} (Table~\ref{tab:anaheim}). The computation time is the largest for $\gamma=1.0$ but is still less than 3 minutes (156~s).\footnote{\citet{Rasmussen:2024aa} reported that each iteration of their algorithm took on average 1.5, 8, and 180 minutes for $\gamma=0.6$, 0.8, and 1.0, respectively.} The total choice set size at equilibrium reaches about 12 million for $\gamma=1.0$, but the proposed method stores none of these paths. Although the details are omitted for brevity, parallelizing the loading over OD pairs reduces the computation time further; with 10 threads, it was reduced to about one fifth in our environment.

\begin{table}[t]
\centering
\caption{Results for the Anaheim network.}\label{tab:anaheim}
\begin{tabular}{rrrrr}
\toprule
$\gamma$ & Iterations & Time (s) & Share of Algorithm~\ref{alg:loading} (\%) & Choice set size\\
\midrule
0.6 & 21 & 21 & 98.8 & 969,111\\
0.8 & 24 & 69 & 99.8 & 3,788,881\\
1.0 & 23 & 156 & 99.9 & 11,617,998\\
\bottomrule
\end{tabular}
\end{table}

\begin{figure}[t]
\centering
\includegraphics[width=0.7\linewidth]{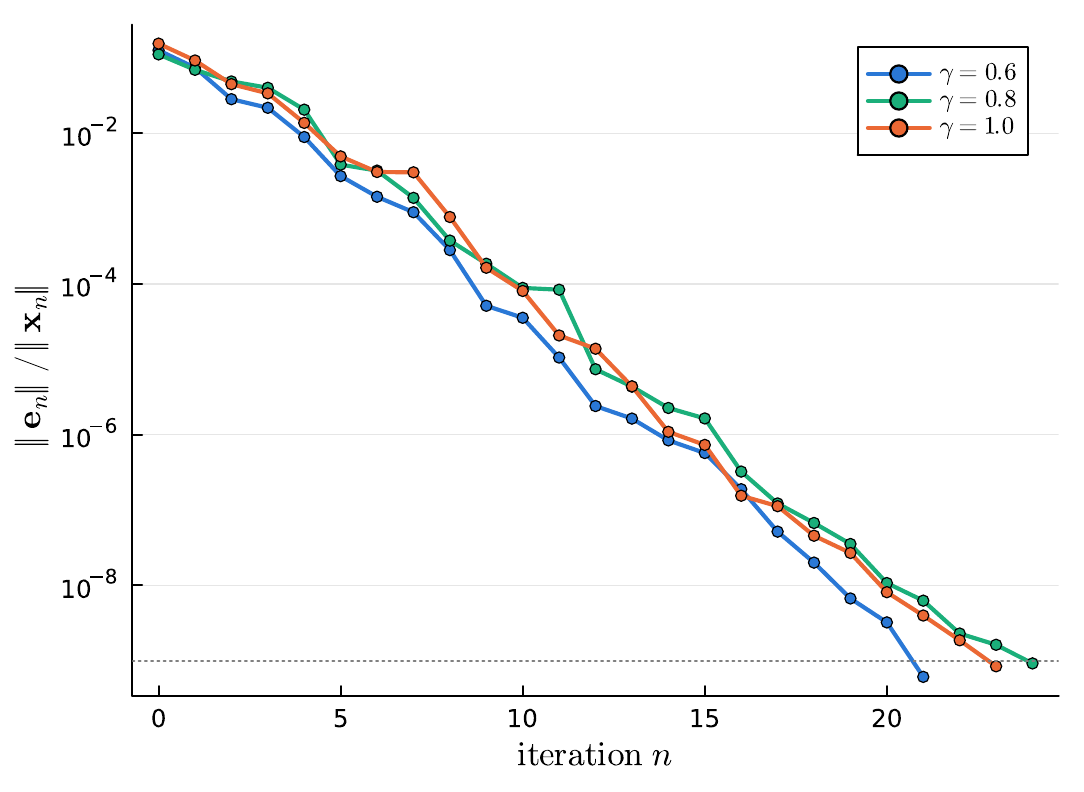}
\caption{Convergence of the relative residual $\|\Vte_n\|/\|\Vtx_n\|$.}\label{fig:anaheim}
\end{figure}

\section{Conclusion}\label{sec:concl}

This study developed a novel solution method for BCM-LDT SUE \citep{Rasmussen:2024aa}. We first reformulated BCM-LDT SUE as a fixed-point problem in link flows, $\Vtx=\VtF(\Vtx)=\VtL(\Vtt(\Vtx))$. We then extended their pruned DFS into an algorithm that computes the BCM-LDT loading $\VtL(\Vtt)$ without storing paths. Since the unknowns are only the link flows, BCM-LDT SUE can be solved by Anderson acceleration, a quasi-Newton-type method. On the Anaheim network, the proposed method yielded path choice sets consistent with those of \citet{Rasmussen:2024aa} and reached equilibrium in less than 3 minutes using a single thread; the choice sets contained about 12 million paths in total, none of which were stored. Additional experiments on the Winnipeg, Chicago Sketch, and Barcelona networks showed that the proposed method is practical when the total choice set size is up to the order of $10^7$; for Winnipeg and Barcelona, equilibrium was reached within 15 minutes (\ref{app:mid}). The code is available at \url{https://github.com/nagae/BCMLDT}.

Several issues remain for future research. First, the proposed method is not guaranteed to converge globally. On the side of the problem, the uniqueness of BCM-LDT SUE has not been established \citep{Rasmussen:2024aa}. On the side of the method, the convergence theory of Anderson acceleration is local and assumes that the fixed-point map is a contraction mapping \citep{Toth:2015aa}, which is not guaranteed for $\VtF$. Second, on large-scale networks such as Chicago Regional, the bounds of BCM-LDT may fail to restrict the choice sets to a manageable size: even with $\tau=1.02$ and $\gamma=0.02$, they contain more than 1,000 paths per OD pair on average, presumably because the network contains a huge number of paths with nearly equal costs (\ref{app:chicago}). Developing choice set criteria that are both behaviorally realistic and computationally tractable on such networks remains an open challenge.

\section*{Funding}
No funding was obtained for this work.

\section*{Disclosure statement}
The author reports there are no competing interests to declare.

\section*{Declaration of generative AI use}
During the preparation of this work, the author used Claude (Anthropic) for coding and writing assistance. After using this tool, the author reviewed and edited the content as needed and takes full responsibility for the content of the published article.

\section*{Data availability statement}
The network data used in this study are publicly available from the Transportation Networks for Research repository \citep{TNTP:2026}. The code implementing the proposed method, together with scripts that reproduce the results in Section~\ref{sec:exp}, is available at \url{https://github.com/nagae/BCMLDT}.

\appendix

\section{Exactness of the pruned DFS}\label{app:proof}

\begin{lemma}
Suppose that all link costs are positive. Then, for each OD pair $w$, the paths found by the pruned DFS with (C1)--(C3) include every path $k\in K_w$ with $\pi_k>0$ exactly once.
\end{lemma}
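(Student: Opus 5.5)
We split the claim into two parts: every path with $\pi_k>0$ is found (completeness), and no path is found twice (uniqueness).

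\emph{Uniqueness.} The DFS tree is indexed by prefixes. Each node of this tree is a distinct node sequence starting at $r$, because every branch extends its prefix by exactly one node, and siblings differ in their last node. The search ends a branch as soon as $s$ is appended, so $s$ never occurs in the interior of an output sequence. Hence each output sequence is a simple $r$--$s$ path by (C1), and the map from leaves to paths is injective. This means no path can be reported twice.

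\emph{Completeness.} Fix $k=(r=i_1,\dots,i_N=s)\in K_w$ with $\pi_k>0$. By Eq.~\eqref{eq:weight}, both factors must be positive, so $C_k<\tau d_w$ and $\phi_k<\gamma$. We show by induction on $n$ that the prefix $(i_1,\dots,i_n)$ is generated and is not pruned when $i_n$ is appended. For each $n<N$ we also have $i_n\neq s$, since $k$ is simple, so the search continues past such a prefix.
\begin{itemize}[nosep]
  \item[(C1)] This fails because $k$ is simple.
  \item[(C2)] Positive link costs make $d$ a shortest-path distance, so $d(i_n,s)\le C_k(i_n\to s)$. Therefore
  \[
    C(r\to i_n)+d(i_n,s)\le C_k<\tau d_w,
  \]
  and (C2) does not hold.
  \item[(C3)] Every pair $(i_l,i_m)$ with $l<m\le n$ lies in $S_k$. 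Hence $\phi(r\to i_n)\le\phi_k<\gamma$, and (C3) does not hold.
\end{itemize}
Positivity of link costs is used in two places. It ensures $d(i_l,i_m)>0$ for distinct nodes, so the ratios in (C3) are well defined. It also ensures that $d(i,j)$ coincides with the minimum over segment alternatives, which is the footnoted identification. The induction therefore reaches $n=N$, and $k$ is found.

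The main obstacle is a bookkeeping issue rather than a deep one. The pruning tests are strict ($>$), while positive weight requires strict inequalities the other way ($<$), and we must check that these are consistent; they are. We must also confirm that the incremental update of $\phi'$ in Algorithm~\ref{alg:dfs} equals $\phi(r\to j)$. This follows because the new pairs in the maximum are exactly $(i_l,j)$ for $l\le n$. The statement does not require that every path found has $\pi_k>0$: paths found with zero weight contribute nothing and are harmless.
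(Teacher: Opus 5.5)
Your proof is correct and follows essentially the same route as the paper. You characterize $\pi_k>0$ as $C_k<\tau d_w$ and $\phi_k<\gamma$, then show that no prefix of $k$ triggers (C1)--(C3), using $d(i_n,s)\le C_k(i_n\to s)$ and the fact that the node pairs of a prefix form a subset of $S_k$. Your explicit uniqueness argument via distinct prefixes in the search tree, and your checks that $i_n\neq s$ for $n<N$ and that the incremental $\phi'$ equals $\phi(r\to j)$, spell out what the paper compresses into the remark that the DFS visits every unpruned prefix exactly once.
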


\noindent\textit{Proof.} By Eq.~\eqref{eq:weight}, $\pi_k>0$ if and only if $C_k<\tau d_w$ and $\phi_k<\gamma$. Let $k=(r=i_1,\dots,i_N=s)\in K_w$ be a path with $\pi_k>0$. Since the DFS visits exactly once every prefix that is not pruned, it suffices to show that none of the prefixes $(i_1,\dots,i_n)$ of $k$ is pruned. (C1) is not met because $k\in K_w$. (C2) is not met because $d(i_n,s)\le C(i_n\to s)$, so that $C(r\to i_n)+d(i_n,s)\le C_k<\tau d_w$. (C3) is not met because $\phi(r\to i_n)$ is the maximum over a subset of the node pairs in $S_k$, so that $\phi(r\to i_n)\le\phi_k<\gamma$. \hfill$\square$

\section{Additional numerical experiments}\label{app:exp}
\setcounter{table}{0}\setcounter{figure}{0}

This appendix reports three additional numerical experiments that complement those in Section~\ref{sec:exp}. In \ref{app:sf}, we compare BCM-LDT with other route choice models on the Sioux Falls network, for which all paths can be enumerated, and show that BCM-LDT excludes paths with excessive detours and keeps the choice sets reasonably small. In \ref{app:mid}, we use mid-size networks to show that the proposed method obtains equilibrium solutions within practical computation time as long as the total choice set size is up to the order of $10^7$. In \ref{app:chicago}, we show that, on large-scale networks, the BCM-LDT criteria may fail to restrict the choice sets to a manageable size.

\subsection{Comparison with other route choice models: Sioux Falls}\label{app:sf}

We examine how BCM-LDT keeps the choice sets reasonably small by excluding paths with excessive detours, comparing it with other route choice models on the Sioux Falls network, for which all paths can be enumerated. We assume that link costs consist of travel times given by BPR-type functions of link flows, and evaluate the distribution of path costs at the respective equilibrium costs.\footnote{The SUEs of all models were obtained by Algorithm~\ref{alg:sue} with $\VtF$ replaced by the loading function of each model, using the same parameters as in Section~\ref{sec:exp} except $\varepsilon=10^{-10}$; hence, the differences in the choice sets shown below do not stem from the accuracy of the solutions.} The scaling parameter for travel cost is set to 0.5 in all models ($\theta_1=0.5$ in BCM-LDT), and the other parameters of BCM-LDT and cost-based BCM are set to the base values in \citet{Rasmussen:2024aa}. MNL is defined over all acyclic paths, and the Markovian logit model \citep{Bell:1995aa,Akamatsu:1996aa,Akamatsu:1997aa} over all paths including cyclic ones. Table~\ref{tab:sf} shows the total choice set sizes, i.e., the numbers of paths with positive choice probabilities, at the equilibria. The total choice set size of BCM-LDT is about two thirds of that of cost-based BCM, since BCM-LDT further excludes the paths with large local detours.

\begin{table}[t]
\centering
\caption{Models compared and their total choice set sizes at equilibrium on the Sioux Falls network.}\label{tab:sf}
\begin{tabular}{llr}
\toprule
Model & Parameters & Choice set size\\
\midrule
BCM-LDT & $\theta_2=0.2$, $\tau=1.6$, $\gamma=0.8$ & 3,426\\
Cost-based BCM & $\tau=1.6$ & 5,153\\
MNL & -- & 1,632,820\\
Markovian logit & -- & $\infty$\\
\bottomrule
\end{tabular}
\end{table}

For each path $k\in K_w$ of each OD pair $w$, we define the relative cost $R_k=C_k/d_w$, and the share of trips using paths with relative costs above $r$ as
\begin{equation}
  S(r)=\frac{\sum_{w}\sum_{k\in K_w:\,R_k>r}h_k}{\sum_{w}q_w}.
\end{equation}
Roughly speaking, $S(r)$ is the share of travelers who take detours costing more than $r$ times the minimum. Fig.~\ref{fig:sf} shows $S(r)$ for each model. For any $r$, $S(r)$ of the Markovian logit model is larger than that of MNL, which is larger than those of cost-based BCM and BCM-LDT. In cost-based BCM and BCM-LDT, $S(r)=0$ for $r\ge\tau=1.6$; in contrast, MNL and the Markovian logit model assign flows even to paths costing more than 10 times the minimum.

\begin{figure}[t]
\centering
\includegraphics[width=0.7\linewidth]{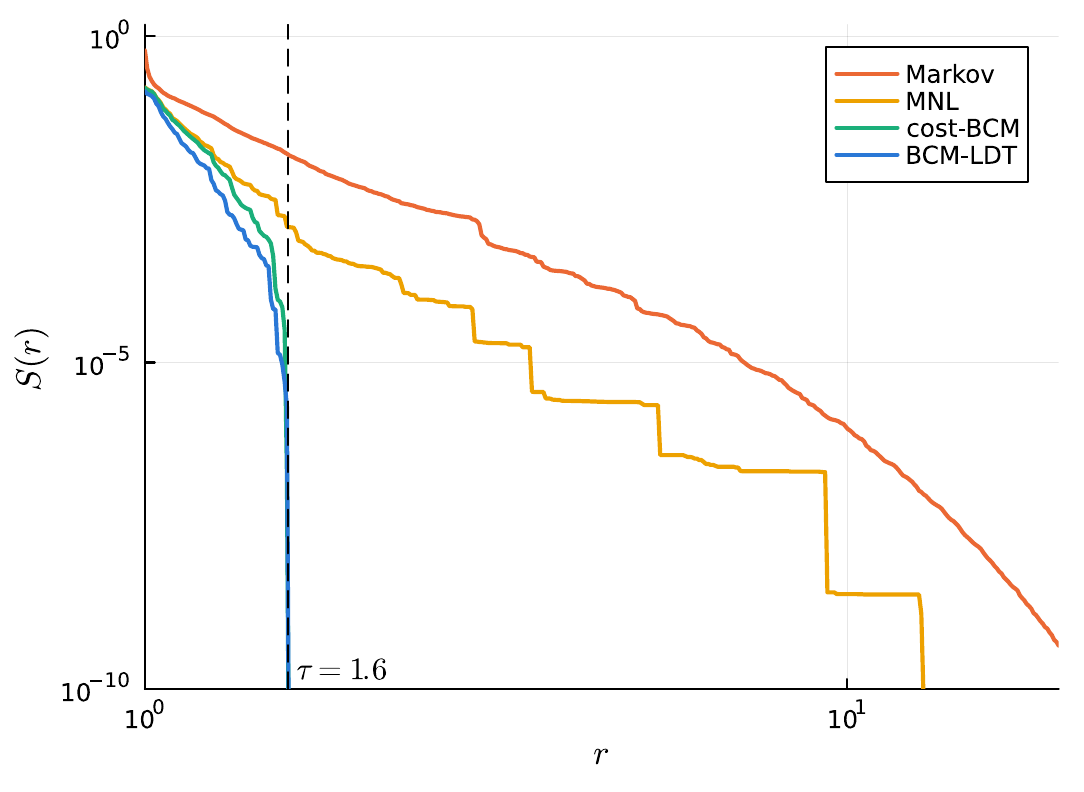}
\caption{Share of travelers taking detours costing more than $r$ times the minimum, $S(r)$, at the equilibria of four route choice models. The dashed line indicates the global cost bound $\tau=1.6$ of cost-based BCM and BCM-LDT.}\label{fig:sf}
\end{figure}

\subsection{Applicability of the proposed method: Winnipeg, Chicago Sketch, and Barcelona}\label{app:mid}

We examine the applicability of the proposed method on three mid-size networks: Winnipeg, Chicago Sketch, and Barcelona. The data are taken from the same source as in Section~\ref{sec:exp}. We assume that link costs consist of travel times only, and set $\theta_1=\theta_2=0.2$, the global cost bound $\tau=1.2$, and the local detour threshold $\gamma=0.2$ or 0.4. The parameters of Algorithm~\ref{alg:sue} are the same as in Section~\ref{sec:exp}, and all computations were performed using a single thread.

We first evaluated the choice set sizes and the computation time of Algorithm~\ref{alg:loading} by executing it once for all OD pairs with link travel times fixed at the UE values provided in the data (Table~\ref{tab:mid}). When the total choice set size was up to the order of $10^7$, one execution took about 30~s or less. For Chicago Sketch with $\gamma=0.4$, in contrast, the total choice set size reached $2.7\times10^9$ and one execution took 18 minutes; a single OD pair alone had $3.3\times10^8$ paths.

We then solved the equilibria for Winnipeg ($\gamma=0.4$) and Barcelona ($\gamma=0.2$) by Algorithm~\ref{alg:sue}, with travel times given by BPR-type functions of link flows.\footnote{Chicago Sketch was excluded because it contains links with zero free-flow travel time, for which some minimum costs become zero and the local detouredness cannot be defined.} In both cases, the algorithm converged within 15 minutes, the total choice set size at equilibrium remained of the order of $10^6$, and Algorithm~\ref{alg:loading} accounted for more than 99\% of the computation time (Table~\ref{tab:mid}).

\begin{table}[t]
\centering
\caption{Choice set sizes and computation times on mid-size networks.}\label{tab:mid}
\small\setlength{\tabcolsep}{4pt}
\begin{tabular}{lcrrrrr}
\toprule
& & \multicolumn{2}{c}{Fixed costs (Alg.~\ref{alg:loading})} & \multicolumn{3}{c}{SUE (Alg.~\ref{alg:sue})}\\
\cmidrule(lr){3-4}\cmidrule(lr){5-7}
Network & $\gamma$ & Time (s) & \hd{Choice\\ set size} & Iterations & Time (s) & \hd{Choice\\ set size}\\
\midrule
Winnipeg                 & 0.2 & 1.3 & $6.8\times10^5$ & -- & -- & --\\
(1,052 / 2,836 / 4,344)  & 0.4 & 21.1 & $2.2\times10^7$ & 46 & 261 & $5.6\times10^6$\\
\midrule
Chicago Sketch           & 0.2 & 27.5 & $4.2\times10^7$ & -- & -- & --\\
(933 / 2,950 / 93,135)   & 0.4 & 1,087 & $2.7\times10^9$ & -- & -- & --\\
\midrule
Barcelona                & 0.2 & 19.4 & $8.6\times10^6$ & 49 & 652 & $7.0\times10^6$\\
(1,020 / 2,522 / 7,922)  & 0.4 & -- & -- & -- & -- & --\\
\bottomrule
\end{tabular}
\par\smallskip
\begin{minipage}{\linewidth}\footnotesize
Note: Numbers in parentheses are the numbers of nodes, links, and OD pairs. Fixed costs: link travel times fixed at the UE values provided in the data. SUE: travel times given by BPR-type functions of link flows. Barcelona with $\gamma=0.4$ was not executed, because a preliminary count on randomly sampled OD pairs indicated that the total choice set size would exceed the order of $10^8$.
\end{minipage}
\end{table}

\subsection{Limits of choice set restriction by the BCM-LDT criteria: Chicago Regional}\label{app:chicago}

We show that, on large-scale networks, the BCM-LDT criteria may fail to restrict the choice sets to a manageable size, using the Chicago Regional network (12,982 nodes, 39,018 links, and 2,296,227 OD pairs) as an example. As in \ref{app:mid}, we assume that link costs consist of travel times only, and fix them at the UE values provided in the data. Since the total choice set size far exceeds a manageable size, as shown below, we counted the paths for 200 randomly sampled OD pairs under four combinations of the global cost bound $\tau$ and the local detour threshold $\gamma$ (Table~\ref{tab:chicago}).

Even with $\tau=1.02$ and $\gamma=0.02$, 1,168.6 paths per OD pair remain on average, and up to 201,714 paths for a single OD pair. The choice sets shrink to tens of paths per OD pair only when the local detour threshold is tightened to 1\%, and even then the total choice set size is estimated at $1.3\times10^8$, beyond the manageable range up to the order of $10^7$ found in \ref{app:mid}. This is presumably because the network contains combinatorially many apparent alternatives with nearly equal costs, owing to its dense local streets and long trips.\footnote{Without the local detour threshold, more than 10,000 and 50,000 paths per OD pair on average have costs within 1\% and 2\% of the minimum, respectively.} Since the number of paths never decreases as $\tau$ or $\gamma$ increases, the BCM-LDT criteria with bounds such as those used for Anaheim ($\tau=1.6$ and $\gamma=0.6$--1.0) can by no means restrict the choice sets to a manageable size on this network.

\begin{table}[t]
\centering
\caption{Number of paths for 200 randomly sampled OD pairs on the Chicago Regional network.}\label{tab:chicago}
\begin{tabular}{llrrr}
\toprule
$\gamma$ & $\tau$ & \hd{Mean per\\ OD pair} & \hd{Max per\\ OD pair} & \hd{Estimated total\\ choice set size}\\
\midrule
0.01 & 1.01 & 55.9 & 4,016 & $1.3\times10^8$\\
     & 1.02 & 55.9 & 4,016 & $1.3\times10^8$\\
0.02 & 1.01 & 150.8 & 9,472 & $3.5\times10^8$\\
     & 1.02 & 1,168.6 & 201,714 & $2.7\times10^9$\\
\bottomrule
\end{tabular}
\par\smallskip
\begin{minipage}{\linewidth}\footnotesize
Note: The estimated total choice set size is the mean per OD pair multiplied by the number of OD pairs (2,296,227). Since the distribution of the number of paths is heavy-tailed, it may be underestimated.
\end{minipage}
\end{table}

\bibliographystyle{bcmldt-harv}
\bibliography{bcmldt}

\begin{thebibliography}{15}
\expandafter\ifx\csname natexlab\endcsname\relax\def\natexlab#1{#1}\fi
\providecommand{\url}[1]{\texttt{#1}}
\providecommand{\href}[2]{#2}
\providecommand{\path}[1]{#1}
\providecommand{\DOIprefix}{doi:}
\providecommand{\ArXivprefix}{arXiv:}
\providecommand{\URLprefix}{URL: }
\providecommand{\Pubmedprefix}{pmid:}
\providecommand{\doi}[1]{\href{https://doi.org/#1}{\path{#1}}}
\providecommand{\Pubmed}[1]{\href{pmid:#1}{\path{#1}}}
\providecommand{\bibinfo}[2]{#2}
\ifx\xfnm\relax \def\xfnm[#1]{\unskip,\space#1}\fi
%Type = Article
\bibitem[{Akamatsu(1996)}]{Akamatsu:1996aa}
\bibinfo{author}{Akamatsu, T.}, \bibinfo{year}{1996}.
\newblock \bibinfo{title}{Cyclic flows, {M}arkov process and stochastic traffic
  assignment}.
\newblock \bibinfo{journal}{Transportation Research Part B: Methodological}
  \bibinfo{volume}{30}, \bibinfo{pages}{369--386}.
\newblock \DOIprefix\doi{10.1016/0191-2615(96)00003-3}.
%Type = Article
\bibitem[{Akamatsu(1997)}]{Akamatsu:1997aa}
\bibinfo{author}{Akamatsu, T.}, \bibinfo{year}{1997}.
\newblock \bibinfo{title}{Decomposition of path choice entropy in general
  transport networks}.
\newblock \bibinfo{journal}{Transportation Science} \bibinfo{volume}{31},
  \bibinfo{pages}{349--362}.
\newblock \DOIprefix\doi{10.1287/trsc.31.4.349}.
%Type = Article
\bibitem[{Anderson(1965)}]{Anderson:1965aa}
\bibinfo{author}{Anderson, D.G.}, \bibinfo{year}{1965}.
\newblock \bibinfo{title}{Iterative procedures for nonlinear integral
  equations}.
\newblock \bibinfo{journal}{Journal of the ACM} \bibinfo{volume}{12},
  \bibinfo{pages}{547--560}.
\newblock \DOIprefix\doi{10.1145/321296.321305}.
%Type = Article
\bibitem[{Bell(1995)}]{Bell:1995aa}
\bibinfo{author}{Bell, M.G.H.}, \bibinfo{year}{1995}.
\newblock \bibinfo{title}{Alternatives to {D}ial's logit assignment algorithm}.
\newblock \bibinfo{journal}{Transportation Research Part B: Methodological}
  \bibinfo{volume}{29}, \bibinfo{pages}{287--295}.
\newblock \DOIprefix\doi{10.1016/0191-2615(95)00005-X}.
%Type = Article
\bibitem[{Duncan et~al.(2025)Duncan, Rasmussen, Watling, Cazor and
  Nielsen}]{Duncan:2025aa}
\bibinfo{author}{Duncan, L.C.}, \bibinfo{author}{Rasmussen, T.K.},
  \bibinfo{author}{Watling, D.P.}, \bibinfo{author}{Cazor, L.},
  \bibinfo{author}{Nielsen, O.A.}, \bibinfo{year}{2025}.
\newblock \bibinfo{title}{Identification of local detouredness and correlation
  effects in a bounded route choice model: solution \& estimation in
  large-scale networks}.
\newblock \bibinfo{journal}{Transportmetrica A: Transport Science}.
  \bibinfo{note}{{In} press}.
\newblock \DOIprefix\doi{10.1080/23249935.2025.2543467}.
%Type = Article
\bibitem[{Duncan et~al.(2022)Duncan, Watling, Connors, Rasmussen and
  Nielsen}]{Duncan:2022aa}
\bibinfo{author}{Duncan, L.C.}, \bibinfo{author}{Watling, D.P.},
  \bibinfo{author}{Connors, R.D.}, \bibinfo{author}{Rasmussen, T.K.},
  \bibinfo{author}{Nielsen, O.A.}, \bibinfo{year}{2022}.
\newblock \bibinfo{title}{A bounded path size route choice model excluding
  unrealistic routes: formulation and estimation from a large-scale {GPS}
  study}.
\newblock \bibinfo{journal}{Transportmetrica A: Transport Science}
  \bibinfo{volume}{18}, \bibinfo{pages}{435--493}.
\newblock \DOIprefix\doi{10.1080/23249935.2021.1872730}.
%Type = Article
\bibitem[{Fang and Saad(2009)}]{Fang:2009aa}
\bibinfo{author}{Fang, H.R.}, \bibinfo{author}{Saad, Y.}, \bibinfo{year}{2009}.
\newblock \bibinfo{title}{Two classes of multisecant methods for nonlinear
  acceleration}.
\newblock \bibinfo{journal}{Numerical Linear Algebra with Applications}
  \bibinfo{volume}{16}, \bibinfo{pages}{197--221}.
\newblock \DOIprefix\doi{10.1002/nla.617}.
%Type = Article
\bibitem[{Liu et~al.(2009)Liu, He and He}]{Liu:2009aa}
\bibinfo{author}{Liu, H.X.}, \bibinfo{author}{He, X.}, \bibinfo{author}{He,
  B.}, \bibinfo{year}{2009}.
\newblock \bibinfo{title}{Method of successive weighted averages ({MSWA}) and
  self-regulated averaging schemes for solving stochastic user equilibrium
  problem}.
\newblock \bibinfo{journal}{Networks and Spatial Economics}
  \bibinfo{volume}{9}, \bibinfo{pages}{485--503}.
\newblock \DOIprefix\doi{10.1007/s11067-007-9023-x}.
%Type = Article
\bibitem[{Rasmussen et~al.(2024)Rasmussen, Duncan, Watling and
  Nielsen}]{Rasmussen:2024aa}
\bibinfo{author}{Rasmussen, T.K.}, \bibinfo{author}{Duncan, L.C.},
  \bibinfo{author}{Watling, D.P.}, \bibinfo{author}{Nielsen, O.A.},
  \bibinfo{year}{2024}.
\newblock \bibinfo{title}{Local detouredness: A new phenomenon for modelling
  route choice and traffic assignment}.
\newblock \bibinfo{journal}{Transportation Research Part B: Methodological}
  \bibinfo{volume}{190}, \bibinfo{pages}{103052}.
\newblock \DOIprefix\doi{10.1016/j.trb.2024.103052}.
%Type = Article
\bibitem[{Sheffi and Powell(1982)}]{Sheffi:1982aa}
\bibinfo{author}{Sheffi, Y.}, \bibinfo{author}{Powell, W.B.},
  \bibinfo{year}{1982}.
\newblock \bibinfo{title}{An algorithm for the equilibrium assignment problem
  with random link times}.
\newblock \bibinfo{journal}{Networks} \bibinfo{volume}{12},
  \bibinfo{pages}{191--207}.
\newblock \DOIprefix\doi{10.1002/net.3230120209}.
%Type = Article
\bibitem[{Toth and Kelley(2015)}]{Toth:2015aa}
\bibinfo{author}{Toth, A.}, \bibinfo{author}{Kelley, C.T.},
  \bibinfo{year}{2015}.
\newblock \bibinfo{title}{Convergence analysis for {A}nderson acceleration}.
\newblock \bibinfo{journal}{SIAM Journal on Numerical Analysis}
  \bibinfo{volume}{53}, \bibinfo{pages}{805--819}.
\newblock \DOIprefix\doi{10.1137/130919398}.
%Type = Misc
\bibitem[{{Transportation Networks for Research Core Team}(2026)}]{TNTP:2026}
\bibinfo{author}{{Transportation Networks for Research Core Team}},
  \bibinfo{year}{2026}.
\newblock \bibinfo{title}{{Transportation Networks for Research}}.
\newblock
  \bibinfo{howpublished}{\url{https://github.com/bstabler/TransportationNetworks}}.
\newblock \bibinfo{note}{Accessed October 20, 2026}.
%Type = Article
\bibitem[{Walker and Ni(2011)}]{Walker:2011aa}
\bibinfo{author}{Walker, H.F.}, \bibinfo{author}{Ni, P.}, \bibinfo{year}{2011}.
\newblock \bibinfo{title}{{A}nderson acceleration for fixed-point iterations}.
\newblock \bibinfo{journal}{SIAM Journal on Numerical Analysis}
  \bibinfo{volume}{49}, \bibinfo{pages}{1715--1735}.
\newblock \DOIprefix\doi{10.1137/10078356X}.
%Type = Article
\bibitem[{Watling et~al.(2018)Watling, Rasmussen, Prato and
  Nielsen}]{Watling:2018aa}
\bibinfo{author}{Watling, D.P.}, \bibinfo{author}{Rasmussen, T.K.},
  \bibinfo{author}{Prato, C.G.}, \bibinfo{author}{Nielsen, O.A.},
  \bibinfo{year}{2018}.
\newblock \bibinfo{title}{Stochastic user equilibrium with a bounded choice
  model}.
\newblock \bibinfo{journal}{Transportation Research Part B: Methodological}
  \bibinfo{volume}{114}, \bibinfo{pages}{254--280}.
\newblock \DOIprefix\doi{10.1016/j.trb.2018.05.004}.
%Type = Misc
\bibitem[{Yao and Zhang(2026)}]{yao2026perturbedutilitymarkoviantraffic}
\bibinfo{author}{Yao, R.}, \bibinfo{author}{Zhang, K.}, \bibinfo{year}{2026}.
\newblock \bibinfo{title}{Perturbed utility {M}arkovian traffic equilibrium:
  theory and computation}.
\newblock \href{http://arxiv.org/abs/2607.09568}{{\tt arXiv:2607.09568}}.

\end{thebibliography}

\end{document}